\newtheorem{theorem}{Theorem}
\newtheorem{definition}[theorem]{Definition}
\newenvironment{proof}{\trivlist\item[]\textbf{Proof.\ }}
                      {\endtrivlist}
\begin{document}

\title{Optimal Scale Invariant Wigner Spectrum Estimation of Gaussian Locally Self-Similar Processes Using Hermite Functions}

\author{Yasaman Maleki}

\maketitle

\begin{abstract}
This paper investigates the mean square error optimal estimation of scale invariant Wigner spectrum for the class of Gaussian locally self-similar processes, by the multitaper method. In this method, the spectrum is estimated as a weighted sum of scale invariant windowed spectrograms. Moreover, it is shown that the optimal multitapers are approximated by the quasi Lamperti transformation of Hermite functions, which is computationally more efficient. Finally, the performance and accuracy of the estimation is studied via simulation.

\noindent{\bf{keywords:}}Locally self-similar processes \and scale invariant Wigner spectrum \and multitaper method \and Hermite functions \and time-frequency analysis.

\noindent{\bf{60G18 \and 60G99}}
\end{abstract}

\section{Introduction}
\label{intro}
Spectrum estimation of non-stationary processes is one of the most important problems in time-frequency analysis. Some definitions have been proposed for non-stationary spectrum, where the most notable one is the Wigner-Ville spectrum (WVS) proposed by Martin \cite{martin} for the class of harmonizable processes. Several methods have been proposed for WVS estimation, such as: the multitaper method \cite{bayram} and the mean square error (MSE) optimal kernel method \cite{say}, which is proposed  for the class of Gaussian harmonizable processes. Moreover, in \cite{wahlberg2}, it is shown that the WVS can be estimated as a weighted sum of spectrograms; also, the WVS can be approximated by a set of Hermite functions for the class of locally stationary processes. Such an approximation is advantageous when it comes to calculation, since only a limited number of Hermite functions need to be calculated \cite{sandsten}.

However, these methods are insufficient for scale invariant (or self-similar) processes, a subclass of non-stationary processes, that occur in important applications such as: turbulence, hydrology, telecommunications network traffic and image processing. Besides, sometimes it happens that self-similar processes are not quite adequate for real world phenomena, and it would be useful to consider more general classes of such processes. Locally self-similar processes (LSSPs) constitute an extensions of self-similar processes. This class of processes can be used to describe physical systems for which statistical characteristics change slowly in time. Flandrin \cite{fla1}, introduced locally self-similar processes as Lamperti transformation of locally stationary processes in Silverman's sense \cite{silverman}. In such situations, some specific tools are needed in time-frequency analysis, that are different from other non-stationary ones and compatible with the scale invariant property. Therefore, a particular type of the Wigner spectrum should be considered.
The scale invariant Wigner spectrum (SIWS) of a process $\{X(t), t \in \mathbbm{R}^+\}$ is defined as:
$$W_{E,X}(t, \xi)  = E \big{\{}\int_0^{\infty} X(t\sqrt{\tau})X^*(t/\sqrt{\tau}) \tau^{-i2\pi \xi-1} d\tau \big{\}}$$
\begin{equation}\label{exactsiws}
\;\qquad = \int_0^{\infty} R_X(t\sqrt{\tau}, t/\sqrt{\tau}) \tau^{-i2\pi \xi-1} d\tau,
\end{equation}
where $E$ denotes the expectation operator and $R_X(t,s) = E(X(t)X^*(s))$ \cite{fla5}, \cite{fla}. The integral inside the expectation operator is a stochastic integral which is called the scale invariant Wigner distribution (SIWD), and will be interpreted as a mean-square integral. The interchange of expectation and integration in the second equation is justified if the above-mentioned stochastic integral exists in the mean-square sense \cite{say}. A necessary and sufficient condition for its existence is
$$\int_0^{\infty} \int_0^{\infty} E\{A_X(t, \tau_1) A^*_X(t, \tau_2)\} (\tau_1/\tau_2)^{-i2\pi \xi} \frac{d\tau_1}{\tau_1} \frac{d\tau_2}{\tau_2} < \infty,$$
for all $(t, \xi)$, where $A_X(t, \tau) := X(t\sqrt{\tau})X^*(t/\sqrt{\tau})$ \cite{say}.

The scale invariant Wigner spectrum describes the time evolution of Mellin variable in a similar way as the Wigner-Ville spectrum does for the (Fourier) frequency variable \cite{fla}.

The scale invariant Wigner spectrum is estimated  in \cite{maleki} for the class of Gaussian locally self-similar processes, using the mean square optimal kernel method. The estimation is based on finding the optimal kernel in the Cohen's class counterpart \cite{fla5} of time-frequency representations. However, the scale invariant feature of the process and consequently, the spectrum, makes the SIWS estimation method different from the other non-stationary ones. So, the non-stationary spectrum estimators  should be modified to be reconciled to SIWS estimation problem.

The estimation of the scale invariant Wigner spectrum, using the optimal kernel method can be simplified by kernel decomposition and calculating multitaper spectrograms. In this paper, the SIWS is estimated  as a weighted sum of spectrograms of the data with different eigenvectors as sliding windows and the eigenvalues as weights. Also, the discrete-time multitapers corresponding to the mean square error optimal kernel for a class of locally self-similar processes are computed and the performance of the resulting estimator is compared to the scale invariant Wigner distribution, which is a classical estimator of the SIWS. Furthermore, it is shown that the optimal multitapers are well approximated by the quasi Lamperti transformation of Hermite functions, that limited number of windows can be used for a mean square error optimal spectrogram estimate. Moreover, the superiority of using the quasi Lamperti transform of Hermite functions in scale invariant Wigner spectrum estimation over using the scale invariant Wigner distribution is studied via simulations.

The results of this paper are true when the covariance function of a Gaussian process and the kernel $\Phi$ have certain regularity properties, meaning a certain amount of smoothness and asymptotic decay \cite{sandsten}. A sufficient condition is, $R_X \in {S}_0(\mathbbm{R}^{+}\times \mathbbm{R}^{+})$ and $\Phi \in {S}_0(\mathbbm{R}^{+} \times \mathbbm{R})$, which denotes Feichtinger’s algebra \cite{banach}, \cite{segal}. In this case, we define $S_0$ on the locally compact abelian (LCA) groups $\mathbbm{R}^{+}\times \mathbbm{R}^{+}$ and $\mathbbm{R}^{+}\times \mathbbm{R}$. So, by the fact that the Mellin transform plays the same central role for self-similarity as the Fourier transform plays for stationarity and also, general nonstationary methods built on Fourier representation lead to corresponding methods for Mellin representation \cite{fla1}, \cite{fla}, we conclude that the abstract Fourier transform becomes the Mellin transform in the first case, and a mixed Mellin-Fourier transform in the second case. So, the Feichtinger’s results [12] justify the invariance properties provided in the paper.
Under this condition the SIWD and ambiguity process and also, the Cohen’s class counterpart of time-frequency representations are all second-order processes \cite{wahl3}. Furthermore, the estimation performance criterion applied in \cite{maleki}, consisting of the integral of the mean square error of Cohen's class counterpart, is finite for $(t, \xi) \in \mathbbm{R}^+ \times \mathbbm{R}$ \cite{wahl2}. Also, the scale invariant Wigner distribution is well defined more generally for every multiplicative harmonizable Gaussian process \cite{fla1}, \cite{fla}, \cite{wahl4}.

The paper is organized as follows. Next section, presents the background on locally self-similar, locally self-similar chirp, multicomponent locally self-similar and circularly symmetric processes. Section \ref{optimal-kernel}, gives a brief review of the optimal kernel method in scale invariant Wigner spectrum estimation. Section \ref{multitaper}, discuss the multiple window spectrum estimation of scale invariant Wigner spectrum. Section \ref{approximation}, concerns the SIWS estimation using the quasi Lamperti transform of Hermite functions. Finally, Section \ref{evaluation}, evaluates the proposed approximation methods. Some examples are also presented throughout the paper which illustrate the premier performance of the techniques.

\section{Preliminaries}\label{preliminaries}
\begin{definition}
A locally self-similar process (LSSP) \cite{fla1}, is a complex-valued stochastic process whose covariance function has the form
\begin{equation}
R_X(t,s)=(ts)^H q(\ln \sqrt{ts}) C(\frac{t}{s})\label{1''}
\end{equation}
$$\qquad\quad= Q \varotimes C \circ \kappa^{-1}(t,s),$$
where $q$ and $C$ are complex-valued functions, $q$ must have a constant sign which we assume positive, and $C$ is a non-negative definite function; $Q(t):=t^{2H}q(\ln t)$, and $\kappa$ denotes the coordinate transformation as
$$\kappa(t,s):=(t\sqrt{s},t/\sqrt{s})\Longleftrightarrow \kappa^{-1}(t,s)=(\sqrt{ts}, \frac{t}{s}).$$

We also assume that $Q$ and $C$ are continuous. Continuity of $Q$ and $C$ is equivalent to mean-square continuity of the process $X(t)$ \cite{maleki}. In fact, everywhere continuity of $R_X$ is implied by continuity on the diagonal \cite{loeve}. Thus, if $Q$ is continuous everywhere and $C$ is continuous in $1$, then $R_X$ is continuous everywhere and the process is mean square continuous \cite{maleki}.

A locally self-similar chirp process (LSSCP) \cite{maleki}, is a Gaussian circularly symmetric process with covariance function
\begin{equation}\label{lsscp}
R_X(t, s) = Q(\sqrt{ts})C(t/s) (t/s)^{ia(\ln \sqrt{ts}-b)}, \qquad a, b \in \mathbbm{R}
\end{equation}
where the constant $a$ determines the chirp frequency, and $b$ the start of the chirp frequency.

The definition of LSSP is also extended to a sum of locally self-similar processes.
A multicomponent locally self-similar process (MLSSP) \cite{maleki}, is a process whose covariance function has the form
\begin{equation}
R_X(t, s) = \sum_{j=1}^{\infty} Q_j(\sqrt{ts}) C_j (t/s),\label{mlssp}
\end{equation}
where each term $Q_j \varotimes C_j \circ \kappa^{-1}$ is the covariance function of a LSSP.
\end{definition}
\vspace{0.1 in}
\input{epsf}
\epsfxsize=3.5in \epsfysize=1.5in
\begin{figure}\label{thekernels1}
\vspace{0.2 in}
\includegraphics[width=1\textwidth]{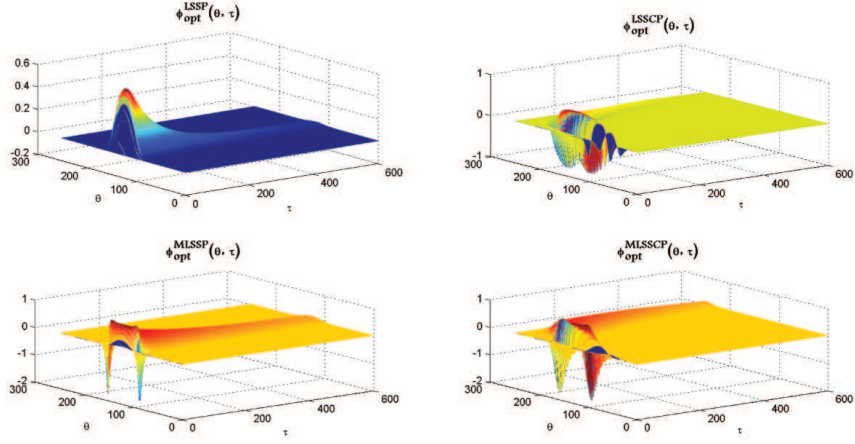}
\vspace{-0.1in}
\caption{\scriptsize The mean square error optimal ambiguity domain kernel of LSSP, LSSCP, MLSSP and MLSSCP with covariance functions (\ref{1''})-(\ref{mlssp}). Top left: LSSP, $c = 2$. Top right: LSSCP, $c = 2, a = 2, b = 0$. Bottom left: MLLSP, $c_1 = 4, c_2 = 10, H_1= 0.2, H_2 = 0.8$. Bottom right:
MLSSCP, $c_1 = 4, c_2 = 10, a = 2, b = 0, H_1= 0.2, H_2 = 0.8$.}
\end{figure}
\vspace{-0.1in}

\begin{definition}
For a circularly symmetric or proper process \cite{pic}, \cite{sch} $X$, the processes
$$\{e^{i\theta} X(t)\}_{\theta \in [0,2\pi)},$$
are identically distributed for all $\theta \in [0,2\pi)$.
\end{definition}

\begin{definition}
Let $g(t)$ be a piecewise continuous and rapidly decaying at both $0$ and $\infty$, i.e., the function $t^A g(t)$ is bounded on $\mathbbm{R}^+$ for any $A \in \mathbbm{R}$. Then, the integral
$$\widetilde{g}(s) := (\mathcal{M}g)(s) = \int_0^{\infty} g(t) t^{-s-1} dt,$$
converges for any complex value of $s$ and defines a holomorphic function of $s$, called the Mellin transform of $g(t)$. If $\int_0^{\infty} |g(t)| t^{-\Re s-1}dt < \infty$, the transform $\widetilde{g}(s)$ exists  \cite{zagier}, where $\Re s = Re(s)$ and the integrability condition is satisfied automatically due to the assumptions.

For a function $g$ of several variables, we denote partial Mellin transform with respect to
variables indexed by $j, k$, with $\mathcal{M}_{j,k}g$.
\end{definition}

\section{Optimal Kernel SIWS Estimation}\label{optimal-kernel}
The scale invariant Wigner spectrum is estimated in \cite{maleki}, for zero-mean, real-valued and complex-valued Gaussian circularly symmetric  locally self-similar processes by deriving the minimum mean square error optimal kernel within Cohen's class counterpart of time-frequency representations (TFRs). The Cohen's class counterpart, closely parallels the conventional Cohen's class \cite{Cohen} and shares with it some of its most interesting properties, namely those concerning the usefulness and versatility of distributions associated to separable smoothing functions \cite{fla5}.

The stochastic Riemann integral
\begin{equation}
P_X(t, \xi)=\int_{-\infty}^\infty \int_0^\infty W_X(\frac{t}{s},\xi-\eta)\Phi(s, \eta) \frac{ds}{s} d\eta,\label{fla}
\end{equation}
\noindent where $W_X(t,\xi) = \int_{0}^{\infty} X(t \sqrt{\tau}) X^*(t /\sqrt{\tau}) \tau^{-i2\pi \xi-1} d\tau $ is the SIWD and $\Phi$ is the 2-D kernel that completely characterizes the particular TFR $P_X$ \cite{fla5}, by definition, is a member of Cohen's class counterpart of time-frequency representations \cite{fla5}. Eq (\ref{fla}) can equivalently be represented as
\begin{equation}\label{cohen}
P_X(t,\xi)=\int_{-\infty}^\infty \int_0^\infty A_X(\theta, \tau)\phi(\theta, \tau)t^{i2\pi\theta}\tau^{-i2\pi\xi-1} d\tau d\theta
\end{equation}
$$ = \mathcal{M}_1^{-1}\mathcal{M}_2\{A_X(\theta, \tau)\phi(\theta, \tau)\},\quad\quad\;\;\;$$
where $\phi(\theta, \tau)=\int_{-\infty}^\infty \int_0^\infty \Phi(t,\xi) t^{-i2\pi\theta-1} \tau^{i2\pi\xi} dt d\xi$, and $A_X(\theta, \tau)$ is the scale invariant ambiguity process \cite{fla5}, which can be obtained by Mellin duality of the scale invariant Wigner distribution, as:
$$A_X(\theta, \tau) = \int_0^{\infty} X(t\sqrt{\tau})X^*(t/\sqrt{\tau}) t^{-i2\pi \theta-1} dt$$
$$ = \mathcal{M}_1\mathcal{M}_2^{-1} W_X(t,\xi).\qquad\;$$

For a zero mean real-valued, or complex-valued circularly symmetric Gaussian process $\{X(t), t>0\}$ where the covariance function  $R_X$ belongs to the Feichtinger algebra $S_0(\mathbbm{R}^{+}\times \mathbbm{R}^{+})$, the stochastic Riemann integrals $W_X$ and $A_X$ exist for all argument values and will be interpreted as mean-square (m.s.) integrals \cite{say}, \cite{loeve}. Thus, by \cite{wahl3}, \cite{wahl2} and the correspondence between the Fourier and the Mellin transforms \cite{fla1}, \cite{fla}, $W_X$ and  $A_X$ are second order stochastic processes. Also, $P_X$ is a second-order stochastic process for all $(t,\xi) \in \mathbbm{R}^+\times \mathbbm{R}$. Furthermore, the kernel $\phi$ is a forward-backward Mellin transform of $\Phi \in S_0(\mathbbm{R}^{+}\times \mathbbm{R})$. Thus, when we work on the LCA group $\mathbbm{R}^{+}\times \mathbbm{R}$, the abstract Fourier transform in \cite{segal} reduces to the Fourier-Mellin transform and $\phi \in S_0(\mathbbm{R}\times \mathbbm{R}^{+})$.

Since the Cohen's class counterpart is completely characterized in terms of kernels, so the SIWS estimation is based on finding the optimal kernel which minimizes the mean square error integral
$$J(\phi)= \int_{-\infty}^{\infty} \int_0^{\infty}  E \big {|}P_X(t,\xi)-W_{E,X}(t,\xi)\big{|}^2 dt d\xi.$$
If $R_X \in S_0(\mathbbm{R}^{+}\times \mathbbm{R}^{+})$ and $\Phi \in S_0(\mathbbm{R}^+\times \mathbbm{R})$, then by the correspondence between the Fourier and Mellin transforms and the results in \cite{wahl3}, we have that $W_{E,X}(t,\xi) \in S_0(\mathbbm{R}^+\times \mathbbm{R})$ and $P_X(t,\xi)\in L^2(\mathbbm{P})$. Thus, by integrability of $E|P_X(t,\xi)|^2$, which is proved in Appendix A, the mean square error integral $J(\phi)$ is finite. Minimization of $J(\phi)$ gives \cite{maleki}
\begin{equation}\label{phii}
\phi_{opt}(\theta, \tau)=\frac{|A_{E,X}(\theta, \tau)|^2}{E|A_X(\theta, \tau)|^2}I_U(\theta, \tau),
\end{equation}
where $I_U(\theta, \tau)$ denotes the indicator function for the open set
$$U=\{(\theta, \tau);E|A_X(\theta, \tau)|^2>0\}\subset \mathbbm{R} \times \mathbbm{R}^+.$$
The optimal time-frequency kernel is computed by
$$\Phi_{opt}(t, \xi) = \mathcal{M}_1^{-1}\mathcal{M}_2\phi_{opt}(t,\xi).$$
There is no guarantee that the optimal kernel $\phi_{opt}$ is a member of $S_0(\mathbbm{R} \times \mathbbm{R}^+)$. Since we need $\phi \in S_0(\mathbbm{R} \times \mathbbm{R}^+)$ to ensure that our formulas are true, we have to follow a similar approximation procedure demonstrated in \cite{wahl2} to be reconciled by the scale invariant property of the process.

\vspace{0.1 in}
\input{epsf}
\epsfxsize=3.5in \epsfysize=1.5in
\begin{figure}\label{Vector}
\vspace{-0.1 in}
\centerline{\includegraphics[width=1\textwidth]{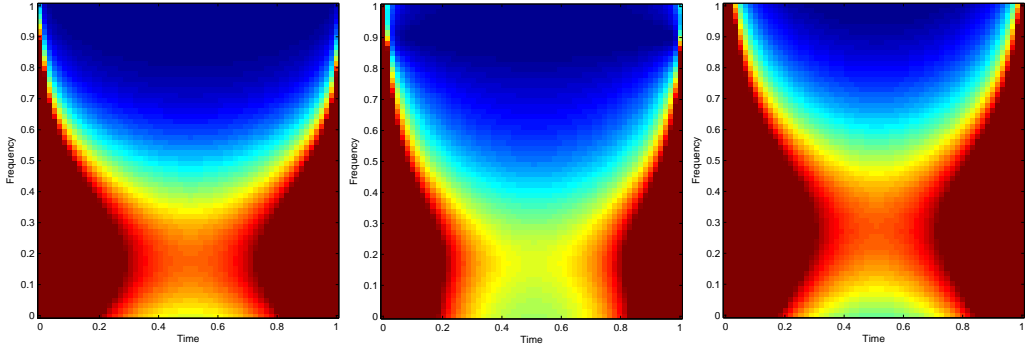}}
\vspace{-0.1in}
\caption{\scriptsize Left: The scale invariant Wigner spectrum of a Gaussian locally self-similar process for given $Q$ and $C$ in (\ref{qc}), with $H=0.5$, $c=4$. Middle: The scale invariant Wigner spectrum estimation of the Gaussian locally self-similar process, by the scale invariant multiple windows method. Right: The scale invariant Wigner spectrum estimation of the Gaussian locally self-similar process, by the quasi Lamperti transform of Hermite functions method. }
\end{figure}
The optimal ambiguity domain kernel, $\phi_{opt}(\theta, \tau)$, for the circularly symmetric Gaussian locally self-similar, locally self-similar chirp, and multicomponent locally self-similar processes with covariance functions (\ref{1''}), (\ref{lsscp}) and (\ref{mlssp}) respectively, are plotted in Figure 1, for different values of $H$ and $c$, where
\begin{equation}\label{qc}
Q(\tau) = \tau^{2H - 1/2 \ln \tau}, \qquad C(\tau) = \tau^{-\frac{c}{8} \ln \tau}, \; c \geq 1.
\end{equation}
The last kernel, $\phi_{opt}^{MLSSCP}(\theta,\tau)$ (Bottom right), is the optimal kernel of a multicomponent locally self-similar chirp process (MLSSCP), which is derived as a combination of the optimal kernels for locally self-similar chirp process and multicomponent locally self-similar process \cite{maleki}. Furthermore, the scale invariant Wigner spectrum of a Gaussian locally self-similar process with covariance function (\ref{1''}), for given $Q$ and $C$, is computed and depicted in Figure 2 (Left) for $H=0.5$, $c=4$.

\section{Multiple Window SIWS Estimation}\label{multitaper}
In this Section, the scale invariant Wigner spectrum is estimated using the Thomson multitaper method \cite{Thomson}. By this method, instead of calculating the ambiguity domain kernel, the SIWS estimation can be computed more efficiently. First, we define a scale invariant windowed spectrogram of a self-similar process $X$, with respect to a window function $\psi$  as

\begin{equation}
\mathcal{G}_{\psi}X(t, \xi) = \bigg{|}\int_0^{\infty} X(s) \psi^*(t/s) s^{-i2\pi \xi-1} ds \bigg{|}^2,
\end{equation}
which can be interpreted as the modulus square of a short-time Mellin transform. The short-time Mellin transform $V_{\psi}(t,\xi)=\int_{0}^{\infty} X(s) \psi^*(t/s) s^{-i2\pi \xi-1}$
$ds$
is a finite variance circularly symmetric Gaussian process which follows from $R_X \in S_0(\mathbbm{R}^+ \times \mathbbm{R}^+)$, $\psi \in S_0(\mathbbm{R}^+)$, the invariance properties of $S_0$ \cite{wahl3} and the assumed circular symmetry of $X$.

Next, some conditions are provided which implies that the Cohen's class counterpart can be replaced by a weighted sum of scale invariant windowed spectrograms. Let
$$\Psi(t_1, t_2):=\kappa^{-1}\circ \mathcal{M}_2^{-1}\Phi(t_1,t_2)$$
$$\;\;\;\;\;\;\;\;\;\;\;\;\;\;\;=\kappa^{-1}\circ \mathcal{M}_1^{-1}\phi(t_1,t_2)$$
\begin{equation}\label{32}
\qquad\qquad\quad\quad\;\;\;=\int_{-\infty}^\infty \phi(\theta,\frac{t_1}{t_2})\sqrt{t_1t_2}^{\; i2\pi\theta} d\theta,
\end{equation}
where $\Phi=\mathcal{M}_1^{-1}\mathcal{M}_2\;\phi$. By the Feichtinger algebra properties \cite{wahl3}, and the correspondence between Fourier and Mellin transforms, since $\Phi, \phi \in S_0$, then their partial Mellin transforms give $S_0$ functions and the above integrals are well defined. Thus, we have the equivalences
$$\phi(\theta,\tau)=\phi^*(-\theta,\frac{1}{\tau}) \Longleftrightarrow \Psi(t_1, t_2)=\Psi^*(t_2, t_1)$$
\begin{equation}\label{33}
\;\;\;\;\;\qquad\qquad \qquad\Longleftrightarrow \Phi(t,\xi)=\Phi^*(t,\xi).
\end{equation}
\noindent For validity of (\ref{33}), see Appendix B.

Now, if $\Phi \in S_0(\mathbbm{R}^{+}\times \mathbbm{R})$ and $\Phi$ is real-valued, then from the invariance properties of $S_0$, and by the fact that the Mellin transform is the abstract Fourier transform of \cite{segal} when we work on the LCA group $\mathbbm{R}^{+}\times \mathbbm{R}^{+}$, Then, $\Psi \in S_0(\mathbbm{R}^{+}\times \mathbbm{R}^{+})$  and by (\ref{33}) the kernel $\Psi$ satisfies the Hermitian property. Thus, since $S_0 \subset L^2$ \cite{banach}, $\Psi$ is the kernel of compact self-adjoint operator on $L^2(\mathbbm{R}^{+}\times \mathbbm{R}^{+})$ \cite{simon}. Therefore, there exist a set of square summable eigenvalues $\lambda_k \in \mathbbm{R}^+$ and eigenfunctions $\psi_k \in L^2(\mathbbm{R}^+)$, such that \cite{wahlberg2}
\begin{equation}
\Psi(t_1, t_2)=\sum_{k=1}^\infty \lambda_k \psi_k(t_1) \psi^*_k(t_2).\label{34}
\end{equation}

Next Theorem shows that, under these circumstances, the Cohen's class counterpart can be rewritten as a weighted sum of scale invariant windowed spectrograms.

\begin{theorem}\label{spectrogram}
Suppose that $X$ is a complex-valued circularly symmetric Gaussian scale invariant process with covariance function $R_X \in S_0(\mathbbm{R}^{+}\times \mathbbm{R}^{+})$. Let $\Phi$ be real-valued and $\Phi \in S_0(\mathbbm{R}^{+}\times \mathbbm{R})$, let  $\Psi$ be defined by (\ref{32}) and fulfill (\ref{34}). Then, we have the equality
$$P_X(t,\xi) = \mathcal{G}(t, \xi), \qquad \forall (t,\xi) \in \mathbbm{R}^+ \times \mathbbm{R}$$
where $P_X$ is a member of Cohen's class counterpart of time-frequency representations and
$$\mathcal{G}(t, \xi) =  \sum_{k=1}^\infty \lambda_k  \mathcal{G}_{\psi_k}X(t, \xi)$$
\begin{equation}\label{spec}
\qquad\qquad\qquad\qquad\qquad\qquad\quad= \sum_{k=1}^\infty \lambda_k \bigg {|}\int_0^\infty X(s)\psi^*_k(\frac{t}{s})s^{-i2\pi\xi-1} ds\bigg {|}^2,\;\;
\end{equation}
that only a finite number of $\lambda_k$ are nonzero.
\end{theorem}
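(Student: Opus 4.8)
The plan is to reduce the identity to an algebraic manipulation of the Cohen's-class-counterpart representation, followed by insertion of the eigenexpansion of $\Psi$. I would start from (\ref{cohen}), write $A_X(\theta,\tau)=\int_0^\infty X(r\sqrt\tau)X^*(r/\sqrt\tau)\,r^{-i2\pi\theta-1}\,dr$, and bring the $\theta$-integral inside, so that
$$P_X(t,\xi)=\int_0^\infty\!\!\int_0^\infty X(r\sqrt\tau)X^*(r/\sqrt\tau)\Big(\int_{-\infty}^\infty\phi(\theta,\tau)(t/r)^{i2\pi\theta}d\theta\Big)\frac{dr}{r}\,\tau^{-i2\pi\xi-1}\,d\tau .$$
Now change variables $(r,\tau)\mapsto(u,v)$ with $u=r\sqrt\tau$, $v=r/\sqrt\tau$, i.e.\ $r=\sqrt{uv}$, $\tau=u/v$; in logarithmic coordinates the Jacobian has absolute value $1$, so $\frac{dr}{r}\,\tau^{-i2\pi\xi-1}\,d\tau=(u/v)^{-i2\pi\xi}\,\frac{du}{u}\,\frac{dv}{v}$. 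By the defining relation (\ref{32}) the inner $\theta$-integral is exactly $\Psi$ evaluated at $t_1/t_2=u/v$ and $\sqrt{t_1t_2}=t/\sqrt{uv}$, i.e.\ at $(t/v,\,t/u)$. This yields
$$P_X(t,\xi)=\int_0^\infty\!\!\int_0^\infty X(u)X^*(v)\,\Psi\!\big(\tfrac{t}{v},\tfrac{t}{u}\big)\,\big(\tfrac{u}{v}\big)^{-i2\pi\xi}\,\frac{du}{u}\,\frac{dv}{v},$$
which, using the Hermitian symmetry $\Psi(t_1,t_2)=\Psi^*(t_2,t_1)$ from (\ref{33}), is manifestly real-valued.

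Next I would substitute the eigenexpansion (\ref{34}), $\Psi(t/v,t/u)=\sum_k\lambda_k\psi_k(t/v)\psi_k^*(t/u)$, interchange the sum with the double integral, and separate the integrand via $(u/v)^{-i2\pi\xi}=u^{-i2\pi\xi}v^{i2\pi\xi}$, so that the $k$-th term factors as a product of a $u$-integral and a $v$-integral. The $u$-integral is precisely the short-time Mellin transform $V_{\psi_k}X(t,\xi)=\int_0^\infty X(u)\psi_k^*(t/u)\,u^{-i2\pi\xi-1}\,du$, and the $v$-integral is its complex conjugate; hence the $k$-th term is $\lambda_k|V_{\psi_k}X(t,\xi)|^2=\lambda_k\,\mathcal G_{\psi_k}X(t,\xi)$, and summing gives $P_X(t,\xi)=\mathcal G(t,\xi)$ for all $(t,\xi)\in\mathbbm R^+\times\mathbbm R$, which is (\ref{spec}).

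The steps needing genuine care, and what I expect to be the main obstacle, are the two exchanges of order. Pulling the $d\theta$-integral of (\ref{32}) through the mean-square integral defining $A_X$ requires a Fubini-type theorem for mean-square (Riemann) integrals; here the hypotheses $R_X\in S_0(\mathbbm R^{+}\times\mathbbm R^{+})$ and $\Phi\in S_0(\mathbbm R^{+}\times\mathbbm R)$ enter through the invariance properties of Feichtinger's algebra recalled earlier, which give $\Psi\in S_0(\mathbbm R^{+}\times\mathbbm R^{+})$ and make $A_X,W_X,P_X$ second-order processes with the integrability needed. Exchanging $\sum_k$ with the double integral requires tail control of the eigenexpansion: since the $\lambda_k$ are square summable and $\Psi\in S_0\subset L^2$, the truncated kernels converge to $\Psi$ in $L^2(\mathbbm R^{+}\times\mathbbm R^{+})$, and one shows the associated truncated quadratic forms converge to $P_X$ in mean square, using that each eigenfunction $\psi_k$ lies in $S_0(\mathbbm R^+)$ (by the $S_0$ kernel theorem) so that $V_{\psi_k}X$ is a finite-variance Gaussian process.

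Finally, the clause that only finitely many $\lambda_k$ are nonzero: under the bare hypotheses the identity $P_X=\mathcal G$ already holds with the series converging in mean square, and finiteness is an extra feature of the optimal kernels $\phi_{opt}$ of Section \ref{optimal-kernel} for the process classes (\ref{1''})--(\ref{mlssp}); I would conclude it by exhibiting the explicit finite-rank form of $\Psi$ for those kernels (equivalently, identifying the operator with a finite weighted combination of finite-dimensional spectral projections), so that the spectrogram expansion (\ref{spec}) terminates and all the interchanges above become trivial finite sums.
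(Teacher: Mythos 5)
Your proposal is correct and follows essentially the same route as the paper's Appendix C: insert the definition of $A_X$ into (\ref{cohen}), perform the change of variables $u=r\sqrt{\tau}$, $v=r/\sqrt{\tau}$ to recognize the inner $\theta$-integral as $\Psi(t/v,t/u)$ via (\ref{32}), and then apply the eigenexpansion (\ref{34}) to factor each term into $\lambda_k|V_{\psi_k}X(t,\xi)|^2$. You additionally spell out the Fubini-type justifications and the finiteness of the nonzero $\lambda_k$, which the paper's three-line proof leaves implicit.
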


\begin{proof}
See Appendix C.
\end{proof}

Thus, the scale invariant Wigner spectrum can be estimated as a  weighted sum of scale invariant windowed spectrograms. Furthermore, it is shown that, the multitaper spectrogram method, is an effective solution from accomplishment aspects.

\subsection{Example 1}
Let $\{X(t), t \in \mathbbm{R}^+\}$ be a Gaussian locally self-similar process with covariance function (\ref{1''}), where $Q$ and $C$ are given in (\ref{qc}). The scale invariant Wigner spectrum can be estimated using the multitaper method, Eq (\ref{spec}), in which the windows $\psi_k$ and weights $\lambda_k$ are obtained from the equation
$$\mathcal{A}_{opt} \; \psi_k = \lambda_k \; \psi_k, \qquad k = 1, \cdots, M $$
where $\mathcal{A}_{opt}$ is the $M \times M$ sampled matrix corresponding to the kernel  $\Psi(t_1, t_2)$ in Eq (\ref{32}), and the optimal kernel $\phi_{opt}^{LSSP}(\theta, \tau)$ is computed by  (\ref{phii}).  Also, the eigenvalues are ordered according to $\lambda_1 \geq \lambda_2 \geq \cdots \geq \lambda_M$. By the multitaper method, the scale invariant Wigner spectrum is estimated and depicted in Figure 2 (Middle), for $H=0.5$ and $c=4$.

\section{SIWS Estimation Using Hermite Functions}\label{approximation}
In this Section, the scale invariant Wigner spectrum is estimated for Gaussian locally self-similar processes using a set of Hermite functions. By this method, the windows $\psi_k$ in (\ref{spec}), are approximated by the quasi Lamperti transform \cite{spec-rez} of Hermite functions. Therefore, instead of calculating the windows $\psi_k$ by Eq (\ref{34}), the quasi Lamperti transform  of the Hermite functions $h_n(t)$, are considered as:
$$\mathcal{H}_{n}(t) := \mathcal{L}_{H,\alpha} h_n(t) = t^H h_n(\log_{\alpha} t),  \qquad n = 0, 1, \cdots, $$
where
$$h_n(t) = (-1)^n (2^n n ! \sqrt{\pi})^{-1/2} e^{t^2/2} \frac{\partial^n}{\partial t^n}(e^{-t^2}), \qquad n = 0, 1, \cdots.$$
is the set of Hermite functions. Such an approximation is beneficial in calculation, since instead of solving a possibly large-scale eigenvalue problem, only a limited number of Hermite functions should be calculated \cite{sandsten}.

\subsection{Example 2}
Consider a circularly symmetric Gaussian locally self-similar process corresponding to the covariance function (\ref{1''}), with $Q$ and $C$ in (\ref{qc}). Using a set of quasi Lamperti transform of Hermite functions, the scale invariant Wigner spectrum is estimated for $H=0.5, c=4$. The result is shown in Figure 2 (Right). Also, the three first quasi Lamperti transform of Hermite functions are depicted in Figure 3, for $H=0.1, 0.5$ and $0.9$. Since, the optimal multitapers of the proposed class of LSSPs are approximated by a set of quasi Lamperti transform of Hermite functions, to show the accuracy of estimation, the square error
$$e_k(c, H) = (h_k - \psi_k)^T(h_k - \psi_k),$$
is computed for all sets of  Hermite functions with $H$ varying from 0.1 to 0.9. The logarithm of errors for the 1st, 2nd, and 3th eigenvectors are presented in Figure 4 for different $c$ values.

\vspace{0.1in}
\input{epsf}
\epsfxsize=3.5in \epsfysize=1.5in
\begin{figure*}\label{threeHermite}
\vspace{-0.15 in}
\includegraphics[width=1\textwidth]{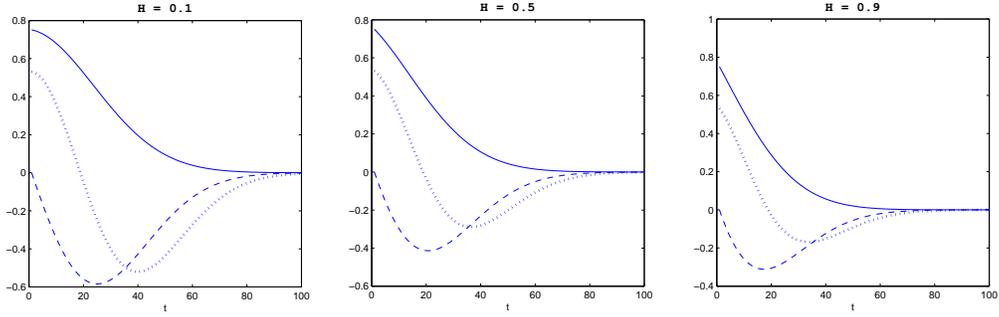}
\vspace{-0.4in}
\caption{\scriptsize  The three first Lamperti transform of Hermite functions for $H = 0.1$ (Left), $H = 0.5$ (Middle), $H = 0.9$ (Right). $\mathcal{H}_1(t)$: solid line, $\mathcal{H}_2(t)$:  dash-dotted line, $\mathcal{H}_3(t)$: dotted line.}
\end{figure*}
\vspace{0.1in}
\input{epsf}
\epsfxsize=3in \epsfysize=1in
\begin{figure*}\label{MSEHermite}
\vspace{-0.1 in}
\includegraphics[width=1\textwidth]{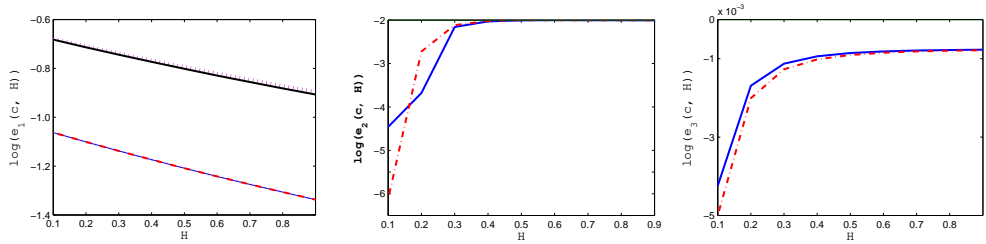}
\vspace{-0.4in}
\caption{\scriptsize  The logarithm of errors between the optimal eigenvectors of a Gaussian LSSP for given $Q$ and $C$ in (\ref{qc}) and the corresponding quasi Lamperti transform of Hermite functions for different $c$ values. The 1st, 2nd, and 3th eigenvector are compared: $c = 4$ (blue solid line), $c = 7$ (red dash-dotted line), $c = 10$ (black solid line) and $c = 20$ (violet dotted line). The logarithm of errors for $c = 10, 20$ in the 2nd and 3th eigenvector are zero. }
\end{figure*}

\section{Evaluation}\label{evaluation}
Now, the predominance of the proposed methods are investigated over using the scale invariant Wigner distribution in SIWS estimation of Gaussian locally self-similar, locally self-similar chirp and multicomponent locally self-similar processes in Tables \ref{lssp1}-\ref{mlssp1}, respectively. In the first simulation, the MSEs of the scale invariant multiple windows method, the quasi Lamperti transform of Hermite functions method, and the SIWD are calculated and presented in Table \ref{lssp1} for three different Gaussian locally self-similar processes corresponding to the covariance function (\ref{1''}) with $Q$ and $C$ in (\ref{qc}) and $H=0.5$, $c=7, 10, 20$, which show that the proposed methods gives the minimum MSE.

The next simulation evaluates the performance of the estimation methods over the SIWD, for Gaussian locally self-similar chirp processes, with covariance function (\ref{lsscp}), where $a=2, b=-2$, and the same $c$ values as in Table \ref{lssp1}. The MSEs are depicted in Table \ref{lsscp1}. Finally, the last simulation with results shown in Table \ref{mlssp1}, evaluates the performance of the estimation methods for Gaussian multicomponent locally self-similar process with covariance function (\ref{mlssp}), the first case with $c_1=4, c_2=7$, the second case with $c_1=4, c_2=10$, and the third case with $c_1=4, c_2=20$. The results show the superiority of the scale invariant multiple windows and the quasi Lamperti transform of Hermite functions methods in spectrum estimation of scale invariant processes.

\begin{table}
\caption{\small{The mean square error of the scale invariant multiple window (SIMW) method and the quasi Lamperti transform of Hermite functions method in SIWS estimation of Gaussian locally self-similar processes, compared to the scale invariant Wigner distribution (SIWD) method, for $H = 0.5$ and $c = 7, 10, 20$.
}}\label{lssp1}
\begin{tabular}{llll}
\hline\noalign{\smallskip}
Method  \qquad \qquad & $c=7$ \qquad \qquad & $c=10$ \qquad \qquad &  $c=20$\\
\noalign{\smallskip}\hline\noalign{\smallskip}
SIMW \qquad \qquad &  $0.24$ \qquad \qquad &  $0.14$ \qquad \qquad &   $0.16$ \\
Hermite \qquad \qquad & $0.93$ \qquad \qquad & $0.87$ \qquad \qquad & $0.83$\\
SIWD \qquad \qquad & $16.79$ \qquad \qquad & $18.67$ \qquad \qquad & $44.92$\\
\noalign{\smallskip}\hline
\end{tabular}
\end{table}
\begin{table}
\caption{\small{The mean square error of the scale invariant multiple windows (SIMW) method and the quasi Lamperti transform of Hermite functions method in SIWS estimation of Gaussian locally self-similar chirp processes with $a = 2, b = -2$, compared to the scale invariant Wigner distribution (SIWD) method, for $H = 0.5$ and $c = 7, 10, 20$.}}\label{lsscp1}
\begin{tabular}{llll}
\hline\noalign{\smallskip}
Method \qquad \qquad  & $c=7$  \qquad \qquad  & $c=10$ \qquad \qquad  & $c=20$\\
\noalign{\smallskip}\hline\noalign{\smallskip}
SIMW \qquad \qquad  &   $0.54$ \qquad \qquad  & $0.90$ \qquad \qquad  & $2.53$ \\
Hermite \qquad \qquad  & $1.03$ \qquad \qquad  & $1.36$ \qquad \qquad  & $2.97$\\
SIWD \qquad \qquad  & $16.81$ \qquad \qquad  & $18.90$ \qquad \qquad  & $45.2$\\
\noalign{\smallskip}\hline
\end{tabular}
\end{table}

\begin{table}
\caption{\small{The mean square error of the scale invariant multiple windows (SIMW) method and the quasi Lamperti transform of Hermite functions method in SIWS estimation of Gaussian multicomponent locally self-similar process, compared to the scale invariant Wigner distribution (SIWD) method, in three cases $c_1 = 4, c_2 = 7$; $c_1 = 4, c_2 = 10$; $c_1 = 4, c_2 = 20$  for $H_1 = 0.2,  H_2 = 0.8$.}}\label{mlssp1}
\label{tab:1}
\begin{tabular}{llll}
\hline\noalign{\smallskip}
Method \quad \quad  & $c_1$ = 4, $c_2$ = 7 \quad \quad  & $c_1$ = 4, $c_2$ = 10 \quad \quad   & $c_1$ = 4, $c_2$ = 20\\
\noalign{\smallskip}\hline\noalign{\smallskip}
SIMW \quad \quad  & $0.16$ \quad \quad  & $0.15$ \quad \quad  & $0.27$\\
Hermite \quad \quad  & $0.57$ \quad \quad & $0.53$ \quad \quad & $0.62 $\\
SIWD \quad \quad  & $26.19$ \quad \quad & $29.88$ \quad \quad & $59.06 $\\
\noalign{\smallskip}\hline
\end{tabular}
\end{table}
The simulation results show that, although locally self-similar processes are a subclass of non-stationary processes, but the scale invariant property exists in such processes, makes them different from the other non-stationary ones; and ordinary non-stationary spectrum estimation methods may not be applicable for time-frequency analysis of such processes. So, some special tools should be considered, which are compatible with the scale invariant property.

\section{Conclusion}
The mean square error optimal scale invariant multitaper spectrogram estimator, for a class of Gaussian locally self-similar processes is evaluated and compared to the scale invariant Wigner distribution, which is a classical estimator of SIWS. An evaluation is also made for the classes of locally self-similar chirp processes, and multicomponent locally self-similar processes. The results show that, dealing with scale invariant processes, current methods of non-stationary spectrum estimation should be modified to be reconciled to the scale invariant property. Moreover, it is shown that, the windows in multitaper method are well approximated by the quasi Lamperti transform of a set of Hermite functions, which is beneficial in calculation.

\section*{Appendices}
\noindent{\textbf{Appendix A}}:
To prove the integrability of $E|P_X(t, \xi)|^2$, we should show that the following integral is finite
$$\int_{-\infty}^{\infty} \int_{0}^{\infty} E|\int_{-\infty}^{\infty} \int_{0}^{\infty} A_X(\theta, \tau) \phi(\theta, \tau) t^{i 2\pi \theta} \tau^{-i 2\pi \xi-1} d\tau d\theta |^2 dt d\xi \qquad\qquad\qquad$$
$$\;\; = \int_{-\infty}^{\infty} \int_{0}^{\infty} E \int_{-\infty}^{\infty} \int_{-\infty}^{\infty} \int_{0}^{\infty} \int_{0}^{\infty} A_X(\theta_1, \tau_1) A^*_X(\theta_2, \tau_2) \phi(\theta_1, \tau_1) \phi^*(\theta_2, \tau_2) \qquad $$
$$ \times \; t^{i 2\pi (\theta_1-\theta_2)} (\tau_1/\tau_2)^{-i 2\pi \xi} \frac{d\tau_1}{\tau_1} \frac{d\tau_2}{\tau_2} d\theta_1 d\theta_2  dt d\xi. \qquad \qquad \qquad \qquad\qquad \qquad \qquad $$
Now, by $\int_{0}^{\infty} t^{-i 2 \pi u-1} dt = \delta(u)$ and $\int_{-\infty}^{\infty} v^{-i 2 \pi \xi} d\xi = \delta(\ln v)$, and by the fact that $A_X(\theta, \tau)$ is a second order stochastic process, the integrability of $E|P_X(t, \xi)|^2$ is concluded.\\

\noindent{\textbf{Appendix B}}: For validity of (\ref{33}), let
 $\phi(\theta,\tau)=\phi^*(-\theta,\frac{1}{\tau})$, so

$$\Psi^*(t_2, t_1)=\int_{-\infty}^\infty \phi^*(\theta\;,\frac{t_2}{t_1})\sqrt{t_1t_2}^{\;-i2\pi\theta}=\int_{-\infty}^\infty \phi(\theta\;,\frac{t_1}{t_2})\sqrt{t_1t_2}^{\;i2\pi\theta}=\Psi(t_1, t_2),$$

$$\Phi^*(t,\xi)=\int_{-\infty}^\infty \int_0^\infty \phi^*(\theta, \tau) \; t^{-i2\pi\theta} \tau^{i2\pi\xi-1} d\tau d\theta$$
$$\qquad\qquad\;\;=\int_{-\infty}^\infty \int_0^\infty \phi^*(-\theta,\frac{1}{\tau}) \; t^{i2\pi\theta} \tau^{-i2\pi\xi-1} d\tau d\theta$$
$$\qquad\qquad =\int_{-\infty}^\infty \int_0^\infty \phi(\theta, \tau) \; t^{i2\pi\theta} \tau^{-i2\pi\xi-1} d\tau d\theta \;\;\;$$
$$  =\Phi(t,\xi).\qquad\qquad\qquad\qquad\;\;\;\;$$

\noindent {\textbf{Appendix C}}: Proof of Theorem \ref{spectrogram}. The Cohen's class counterpart is represented as
$$P_X(t,\xi)=\int_{-\infty}^\infty\int_0^\infty A_X(\theta, \tau)\phi(\theta,\tau) \; t^{i2\pi\theta} \tau^{-i2\pi\xi-1} d\tau d\theta \qquad\qquad\qquad\qquad$$
$$\qquad\qquad\quad=\int_{-\infty}^\infty \int_0^\infty \int_0^\infty X(u\sqrt{\tau})X^*(u/\sqrt{\tau}) \phi(\theta,\tau) (\frac{t}{u})^{i2\pi\theta} \tau^{-i2\pi\xi-1} \frac{du}{u} d\tau d\theta.\;$$
Let $t_1:=u\sqrt{\tau}, t_2:=u/\sqrt{\tau}$, then using (\ref{32}), we have that
$$P_X(t,\xi) = \int_0^\infty\int_0^\infty X(t_1)X^*(t_2) \Psi(\frac{t}{t_2}\;, \frac{t}{t_1})t_1^{-i2\pi\xi-1}\;t_2^{i2\pi\xi-1}dt_1 dt_2,$$
where for a Hermitian kernel $\Psi$, Eq (\ref{34}) completes the proof.

\end{document}